\newcommand{\R}{\mathbb{R}}
\theoremstyle{plain}
\newtheorem{theorem}{Theorem}[section]
\newtheorem{lemma}[theorem]{Lemma}
\newtheorem{proposition}[theorem]{Proposition}
\newcommand\F{{\mathscr F}}
\newcommand\M{{\mathcal M_{T}^{+}}}
\newcommand\dx{{\mathrm d}}
\begin{document}

\title{Supersolutions for a class of semilinear heat equations}



\author{James~C.~Robinson\footnote{Mathematics Institute,
Zeeman Building,
University of Warwick,
Coventry, CV4 7AL, UK, Tel.: +44 (0)24 7652 4657, Fax: +44 (0)871 256 4140, j.c.robinson@warwick.ac.uk} \and     Miko\l{}aj Sier\.z\k{e}ga\footnote{Mathematics Institute,
Zeeman Building,
University of Warwick,
Coventry, CV4 7AL, UK, m.l.sierzega@warwick.ac.uk   }    }


\date{}
%

\maketitle

\begin{abstract}
A semilinear heat equation $u_{t}=\Delta u+f(u)$ with nonnegative initial data in a subset of $L^{1}(\Omega)$ is considered under the assumption that $f$ is nonnegative and nondecreasing and $\Omega\subseteq \R^{n}$. A simple technique for proving existence and regularity based on the existence of supersolutions is presented, then a method of construction of local and global supersolutions is proposed. This approach is applied to the model case $f(s)=s^{p}$, $\phi\in L^{q}(\Omega)$: new sufficient conditions for the existence of local and global classical solutions are derived in the critical and subcritical range of parameters. Some possible generalisations of the method to a broader class of equations are discussed.  
\end{abstract}

\section{Introduction}
In this paper we consider the question of the local existence of solutions to the semilinear heat equation 
\begin{align}\label{eq:main}
&u_{t}-\Delta u=f(u) \quad \mbox{in}\quad \Omega_{T}, \nonumber\\
&u=0 \quad \mbox{on}\quad \partial\Omega\times (0,T), \\
&u(0)=\phi\quad \mbox{in}\quad\Omega,\nonumber
\end{align}
where $\Omega$ is a smooth domain in $\R^{n}$ and $\Omega_{T}=\Omega\times (0,T)$. We assume throughout that the source term $f:[0,\infty)\mapsto [0,\infty)$ is continuous and nondecreasing. The initial data is always assumed to be an element of $L^{1}_{+}(\Omega)$ i.e.\ the set of a.e.\ nonnegative, integrable functions on $\Omega$, or some subset thereof to be specified in the course of the presentation.\par
We are interested in classical solutions, that is functions $u:\Omega_{T}\mapsto \R$ with $u(0)=\phi$ that satisfy the equation pointwise with derivatives understood in their classical sense and continuous up to the boundary $\partial \Omega$. It is known that when $\phi\in L^{\infty}(\Omega)$ then the problem admits a local classical solution, see e.g.\ \cite{SOUPLET}. For unbounded data however statements of such generality are not available and more specific model problems are considered with initial data belonging to some well-undestood Banach or Hilbert space and some particular choice of the source term allowing for analysis. \par
As indicated in the opening paragraph we also impose restrictions on the initial data and the source term. However apart from nonnegativity and integrability we do not impose any additional conditions on the data allowing the existence argument to identify the subset of admissible data. \par
We follow the standard practice of tackling the problem (\ref{eq:main}) indirectly via the associated integral formulation, the so-called variation of constants formula
\begin{equation}\label{eq:VoC}
u(t)=S(t)\phi+\int_{0}^{t}S(t-s)f(u(s))\dx s.
\end{equation}
In this formulation $\{S(t)\}_{t\geq 0}$ represents the heat semigroup associated with the domain $\Omega$ and is defined as 
\begin{equation}\label{eq:ker}
 (S(t)\phi)(x)=\int_{\Omega}K(t,x,y)\phi(y)\dx y\quad \mbox{ for }\quad x\in \overline \Omega,
 \end{equation}
where $K\geq 0$ is the heat kernel for the domain $\Omega$, see e.g. \cite{CAZENAVEBOOK}. In other words the function $v(t)=S(t)\phi$ solves the linear heat equation 
 \begin{align*}
&v_{t}-\Delta v=0 \quad \mbox{in}\quad \Omega, \ t>0,\nonumber\\
&v=0 \quad \mbox{on}\quad \partial\Omega,\ t>0, \\
&v(0)=\phi\quad \mbox{in}\quad\Omega.\nonumber
\end{align*}
Every solution of the integral formulation (\ref{eq:VoC}) that is bounded for $t>0$ must solve (\ref{eq:main}). Hence the route to classical solutions of the original partial differential equation proceeds through analysis of bounded solutions of the variation of constants formula. \par
The existence argument involves the operator
\begin{equation}\label{f}
\F[v](t)=S(t)\phi+\int_{0}^{t}S(t-s)f\left(v(s)\right)\dx s
\end{equation}
which we will use to define a sequence of functions converging to a solution $u=\F[u]$ on $\Omega_{T}$. The method used relies on monotonicity of the operator $\F$ which results from positivity of the heat semigroup and monotonicity of the source term $f$. It should be mentioned that a similar technique was used by Weissler in his investigation of the global solutions to the model problem $f(s)=s^{p}$ in the whole space, see \cite{WEISSLER1981}. \par
The statement of existence of local classical solutions is divided into two parts. The abstract existence proof assumes only that we have a supersolution i.e.\ a function $w$ such that $\F[w]\leq w$. Existence of such an object combined with monotonicity of $\F$ will turn out to give us existence of a solution of the integral formulation. Hence the problem of proving existence of a solution reduces to finding a supersolution which, as we will show in the sequel, is relatively straightforward in many interesting cases. An immediate benefit of the technique is that the regularity of the solution is then inferred from the properties of the supersolution e.g.\ if the supersolution is bounded for positive times then it follows from the standard regularity results that the solution is necessarily classical, see \cite{SOUPLET}. Hence the regularity is ``for free'' and does not have to be obtained by means of bootstrap arguments as e.g.\ in \cite{BREZISCAZENAVE1996,SOUPLET}. \par
In the second part of the argument we provide a practical way of finding suitable supersolutions. We will arrive at certain structural conditions specifying the initial data that allow in certain examples to define a supersolution and thus deduce existence of a solution of (\ref{eq:VoC}). In particular we will see that the central object is the integral 
\begin{equation}
\int_{0}^{t}S(t-s)f\left(S(s)\phi\right)\dx s
\end{equation}
which involves all the parameters of the problem. \par
Next we test our findings on the model problem $f(s)=s^{p}$ with initial data in $L^{q}_{+}(\Omega)$. The problem of existence of local and global classical solutions is well-understood and explicit criteria discriminating well and ill-posed problems are known and presented in literature in the form of inequalities involving critical exponents. For this model problem the critical exponent is given by $q_{c}=\frac{n(p-1)}{2}$ and well-posedness is usually analysed in three regimes: supercritical $q>q_{c}$, critical $q=q_{c}>1$ and subcritical $q<q_{c}$. These relations reflect the balance between the assumed regularity of the initial data, the growth of the source term and the dimension of the domain. \par
The existence and uniqueness results are obtained using Banach's fixed point argument in a suitable metric space of curves in $L^{q}(\Omega)$ satisfying certain norm decay properties, see \cite{BREZISCAZENAVE1996,SOUPLET}. The original proof relying on the Banach contraction theorem appeared in \cite{WEISSLER1979}. In the same paper the author offered also a second method of proof based on the positivity properties of the heat semigroup that is similar to our approach. In the discussion that follows the author compares the two existence theorems to conclude that the positivity based argument recovers the existence result in the supercritical range of parameters, but does not reproduce the results in the critical and subcritical cases.  In particular the positivity method as described in \cite{WEISSLER1979} does not allow one to deduce the existence of local classical solutions for all initial data in $L^{q}(\Omega)$ in the critical case. The proof of this result relies on a delicate application of smoothing estimates in Lebesgue spaces followed by a bootstrap argument, see \cite{BREZISCAZENAVE1996} for details. \par
The particularly appealing property of the positivity argument is that it yields pointwise bounds on the solution in the form of space-time profiles whereas the usual approach offers a rate of decay of Lebesgue norms instead. Moreover, as we will show later on, the positivity based existence theorem is stated in terms of integrability properties of the map $t\mapsto \|S(t)\phi\|_{L^{\infty}(\Omega)}$ rather than critical exponents associated with smoothing estimates in Lebesgue spaces. An assumption of this kind is much more general as it does not presuppose the space of initial data. For example the existence claim in the supercritical case mentioned above requires only that the integral $\int_{0}^{\tau}\|S(s)\phi\|_{L^{\infty}(\Omega)}^{p-1}\dx s$ is bounded for some $\tau>0$ which may be realised irrespective of $\phi$ being in any particular Lebesgue space, whereas the critical exponent condition $n(p-1)/2<q$ is specifically related to the context of Lebesgue spaces. It is therefore desirable to extend the method based on positivity to the critical and subcritical range. This is done in Section \ref{example} where in fact the three ranges of parameters are addressed in a unified approach.  \par
The method also provides sufficient criteria for global existence without any additional effort. As mentioned before, global solutions for the model case $f(s)=s^{p}$ where investigated in \cite{WEISSLER1981}. The global existence result found there relies on the smallness of $\int_{0}^{\infty}\|S(s)\phi\|_{L^{\infty}(\Omega)}^{p-1}\dx s$ in the supercritical case or smallness of $\|\phi\|_{L^{q}(\Omega)}$ in the critical case. Again in the supercritical case the argument uses the positivity of the heat semigroup and provides a space-time profile. However, in the critical case a norm based technique is used instead. Our construction of the supersolutions provides new sufficient criteria for existence of global classical solutions in the critical and subcritical range of parameters.\par
The general construction of supersolutions contained in Proposition \ref{prop:main} is by no means restricted to the case of simple polynomial nonlinearity and may be applied to other source terms of interest. In the last section we discuss some possible extensions.
  
\section{Existence result involving supersolutions}\label{sec:main}
Let $\mathcal M_{T}^{+}$ denote the set of nonnegative, almost everywhere finite, measurable functions on $\Omega_{T}$. For our purposes it suffices to define a solution to be any $u\in \mathcal M_{T}^{+}$ such that $\F[u]=u$ a.e.\ in $\Omega_{T}$. Any $w\in \mathcal M_{T}^{+}$ satisfying $\F[w]\leq w$ (resp. $\F[w]\geq w$) will be called a supersolution (respectively subsolution).\par
The following result serves to reduce the problem of showing existence of a solution to that of showing the existence of a supersolution. 
\begin{theorem}\label{thm:main}
Assume that $f:[0,\infty)\mapsto [0,\infty)$ is continuous, nondecreasing and let $\phi\in L^{1}_{+}(\Omega)$. Then the operator $\F$ admits a solution in $\Omega_{T}$ if and only if it admits a supersolution in $\Omega_{T}$.   
\end{theorem}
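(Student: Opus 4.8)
The forward implication is trivial: any solution $u = \F[u]$ is in particular a supersolution, since $\F[u] \leq u$ holds (with equality). So the content is the reverse direction: given a supersolution $w$ with $\F[w] \leq w$, produce a solution. The natural approach is \emph{monotone iteration}. I would define the sequence $u_0 = 0$ and $u_{k+1} = \F[u_k]$, and show it increases to a limit $u$ which solves $\F[u] = u$, and that $u \leq w$ throughout.

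\medskip

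\textbf{Key steps, in order.}
\begin{enumerate}
\item \emph{Monotonicity of $\F$.} First establish that $\F$ is monotone on $\M$: if $v_1 \leq v_2$ a.e.\ in $\Omega_T$ then $\F[v_1] \leq \F[v_2]$. This follows from positivity of the heat kernel $K \geq 0$ (hence $S(t)$ preserves the a.e.-order) together with $f$ nondecreasing. Both are hypotheses already in place.
\item \emph{The iteration is well-defined and monotone increasing.} Since $u_0 = 0 \leq w$ and $\F$ is monotone with $\F[w] \leq w$, induction gives $u_k \leq u_{k+1} \leq w$ for all $k$: indeed $u_1 = \F[0] = S(t)\phi + \int_0^t S(t-s) f(0)\,\dx s \geq 0 = u_0$, and if $u_{k-1} \leq u_k \leq w$ then applying $\F$ gives $u_k \leq u_{k+1} \leq \F[w] \leq w$. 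One must check each $u_k \in \M$ — finiteness a.e.\ is inherited from the bound $u_k \leq w$ and $w \in \M$; measurability and nonnegativity are clear.
\item \emph{Passing to the limit.} By monotone convergence the pointwise limit $u(x,t) = \lim_k u_k(x,t)$ exists in $[0,\infty]$ and satisfies $u \leq w$, so $u$ is a.e.\ finite, i.e.\ $u \in \M$. It remains to show $\F[u] = u$ a.e. Write $\F[u_k] = u_{k+1} \to u$. On the other side, $\F[u_k](x,t) = S(t)\phi(x) + \int_0^t \int_\Omega K(t-s,x,y) f(u_k(y,s))\,\dx y\,\dx s$; since $u_k \uparrow u$ and $f$ is continuous and nondecreasing, $f(u_k) \uparrow f(u)$, and by the monotone convergence theorem (applied in the $(y,s)$ integral, with $K \geq 0$) the integral term converges to $\int_0^t S(t-s) f(u(s))\,\dx s$. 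Hence $\F[u_k] \to \F[u]$ pointwise a.e., and by uniqueness of limits $\F[u] = u$ a.e.\ in $\Omega_T$.
\end{enumerate}

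\medskip

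\textbf{The main obstacle.} The delicate point is the limit passage in step 3 — specifically, justifying that one may interchange the limit in $k$ with the space-time integral defining $\F[u_k]$, and confirming $\F[u]$ is finite a.e.\ (equivalently, that $u$ is genuinely a solution and not merely a formal fixed point taking the value $+\infty$). Here the domination $u_k \leq w$ is essential: it yields $\F[u_k] \leq \F[w] \leq w < \infty$ a.e., so $\F[u] = \lim \F[u_k] \leq w < \infty$ a.e., and the monotone convergence theorem applies cleanly since all integrands are nonnegative and increasing in $k$. A secondary technical matter is ensuring that the integrals in the definition of $\F[v]$ make sense for $v \in \M$ (i.e.\ that $s \mapsto S(t-s) f(v(s))$ is measurable and the double integral is well-defined) — this is routine from Fubini--Tonelli given $K \geq 0$ and measurability of $v$, and in any case only the values along the bounded iterates $u_k$ and the bounded supersolution $w$ are ultimately needed. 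No smoothing estimates, Lebesgue-space bounds, or contraction arguments enter; the proof is purely order-theoretic, which is precisely the point of the theorem.
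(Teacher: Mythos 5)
Your proposal is correct, and it is the same basic strategy as the paper's --- monotone iteration driven by the order-preservation of $\F$ (positivity of the kernel plus $f$ nondecreasing), with the supersolution acting as a barrier --- but you iterate in the opposite direction. The paper sets $w_0=w$ and $w_{k+1}=\F[w_k]$, obtaining a \emph{nonincreasing} sequence $w_k\downarrow w_\infty$, and then passes to the limit inside the integral; you set $u_0=0$ and obtain a \emph{nondecreasing} sequence $u_k\uparrow u\leq w$. Your direction has a small technical advantage: for an increasing sequence of nonnegative integrands $K\,f(u_k)\uparrow K\,f(u)$ the monotone convergence theorem applies with no further hypothesis, and the bound $\F[u]\leq\F[w]\leq w<\infty$ a.e.\ then guarantees the limit is finite --- exactly the point you flag as the main obstacle. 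The paper's descending version invokes ``monotone convergence'' for a decreasing sequence, which strictly speaking requires the first term $K\,f(w_0)=K\,f(w)$ to be integrable over $\Omega\times(0,t)$; this does hold, since $\int_0^t\int_\Omega K(t-s,x,y)f(w(s,y))\,\dx y\,\dx s=\F[w](t,x)-S(t)\phi(x)\leq w(t,x)<\infty$ a.e., but the paper does not say so, whereas your argument sidesteps the issue entirely. The two constructions may in principle produce different fixed points: yours yields the minimal nonnegative solution (independent of which supersolution $w$ one starts from), while the paper's limit $w_\infty$ depends on $w$ and sits above your $u$. Both establish the theorem as stated.
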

\begin{proof}
By definition every solution is at the same time a supersolution and we only need to show that existence of a supersolution implies existence of a solution. \par
Our existence argument relies on positivity and monotonicity properties of the operator $\F$. Take $u,v\in \M$, $u\leq v$ a.e., $u(0)\leq v(0)$ a.e.\ in $\Omega$ such that $\F[u],\F[v]\in \M$. Then 
\[
\F[v](t)-\F[u](t)=S(t)(v(0)-u(0))+\int_{0}^{t}S(t-s)\left(f\big(v(s)\big)-f\big(u(s)\big)\right)\geq 0
\]
follows since by the monotonicity assumption on $f$ we have $f(v(s))-f(u(s))\geq 0$ and the heat semigroup is positivity preserving, see \cite{SOUPLET}. \par
Starting with the supersolution $w$ we can create a sequence $\{w_{k}=\F^{k}[w]\}_{k\geq 0}$, $w_{0}=w$. Now due to monotonicity and positivity of $\F$ we find that 
\[
\F[w]\leq w\Rightarrow  \F^{2}[w]\leq \F[w]\Rightarrow \dots \Rightarrow \F^{k+1}[w]\leq \F^{k}[w]\dots \quad \mbox{ a.e. in }\Omega_{T}. 
\]
Thus over almost every point $(t,x)\in \Omega_{T}$ we have a nonincreasing, nonnegative sequence $\{w_{k}(t,x)\}_{k\geq 0}$ which allows us to define the candidate solution by taking the pointwise limit
\[
w_{\infty}(t,x)=\lim_{k\to\infty}w_{k}(t,x) \quad \mbox{ whenever it exists}.
\]  
\par
It remains to show that $\F[w_{\infty}]=w_{\infty}$ a.e.\ in $\Omega_{T}$. With the integral representation (\ref{eq:ker}) in mind we can apply the monotone convergence theorem to get 
\[
\lim_{k\to\infty}\F[w_{k}](t,x)=S(t)\phi+\int_{0}^{t}\int_{\Omega}K(t-s,x,y)\lim_{k\to\infty}f(w_{k}(s,y))\dx y\,\dx s
\]
a.e.\ in $\Omega_{T}$. 
By continuity
\[
\lim_{k\to\infty}f(w_{k}(s,y))=f\left(\lim_{k\to\infty}w_{k}(s,y)\right)=f(w_{\infty}(s,y))\quad \mbox{ a.e.\ in }\Omega_{T}.
\]
Thus we have continuity of $\F$ at $w_{\infty}$ i.e.\ 
\[
\lim_{k\to\infty}\F[w_{k}]=\F\left[\lim_{k\to\infty}w_{k}\right]=\F[w_{\infty}]\quad \mbox{ a.e.\ in }\Omega_{T}.
\]
Now, due to the construction of the sequence we have $w_{k+1}=\F[w_{k}]$ and so we finally arrive at 
\[
w_{\infty}=\lim_{k\to\infty}w_{k+1}=\lim_{k\to\infty}\F[w_{k}]=\F[w_{\infty}]
\]
a.e.\ in $\Omega_{T}$, which means that $w_{\infty}$ is a solution. 
\end{proof}
 
\section{Identifying supersolutions}
Theorem \ref{thm:main} presupposes existence of a supersolution without indicating how to construct one. In this section we will present a practical way of finding a candidate supersolution. The algorithm will be then tested on a specific well researched model case of a polynomial nonlinearity and compared with known results.\par
The guiding principle is that equation (\ref{eq:VoC}) is a perturbation formula in the sense that the solution is assumed to be given by the linear heat flow $S(t)\phi$ which is then supplemented by a correction term that accounts for the effect of the nonlinearity. Hence we would expect that the contribution of the integral term is initially small in some sense as compared to $S(t)\phi$. The aim of this section is to devise a method of modifying the integral part of the formula (\ref{eq:VoC}) so that the resulting object is a prospective supersolution.  \par
Suppose that there exists a solution $u:\Omega_{T}\mapsto[0,\infty)$, then 
\[
u(t)=S(t)\phi+\int_{0}^{t}S(t-s)f\left(u(s)\right)\dx s \geq S(t)\phi
\] 
and if we now apply the operator $\F$ to both sides repeatedly then due to monotonicity we will find that 
\[
\F^{k}[S(\cdot)\phi](t)\leq u(t)\quad \mbox{ for every }\ k\geq 0
\]
i.e.\ every $\F^{k}[S(\cdot)\phi](t)$ is a subsolution. \par
Our strategy now is to take the first nontrivial subsolution, which is $\F[S(\cdot)\phi](t)$,  and by bounding it from above derive a candidate supersolution. Hence broadly speaking we will replace 
\[
\F[S(\cdot)\phi](t)=S(t)\phi+\int_{0}^{t}S(t-s)f\left(S(s)\phi\right)\dx s
\]
with
\[
w(t)=S(t)\phi+\int_{0}^{t}S(t-s)F(s)\dx s
\]
for some $F\in \M$ such that 
\[
\int_{0}^{t}S(t-s)f\left(S(s)\phi\right)\dx s\leq \int_{0}^{t}S(t-s)F(s)\dx s
\]
hoping that $w$ defines a supersolution on $\Omega_{T}$ for some $T>0$. The optimal way of finding a supersolution depends on $f$ and no one general recipe seems available. Nevertheless there is a simple generic approach which gives satisfactory results.

\begin{proposition}\label{crit:prop}
Suppose there exists an integrable function $h:[0,T]\mapsto [0,\infty)$ and some $\psi \in L_{+}^{1}(\Omega)$ such that for $t\in[0,T]$

\begin{equation}\label{crit:eq2}
f\left(S(t)\phi+S(t)\psi\int_{0}^{t}h(s)\dx s\right)\leq h(t)S(t)\psi,
\end{equation}
then 
\[
w(t)=S(t)\phi+ S(t)\psi\int_{0}^{t}h(s)\dx s
\]
is a supersolution on $\Omega_{T}$. 
\end{proposition}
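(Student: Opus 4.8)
The plan is to verify directly the two conditions in the definition of a supersolution: that $w\in\M$, and that $\F[w]\le w$ a.e.\ in $\Omega_T$. Writing out $\F[w](t)=S(t)\phi+\int_0^t S(t-s)f(w(s))\,\dx s$ and cancelling the common term $S(t)\phi$, the inequality $\F[w]\le w$ is equivalent to the Duhamel bound
\[
\int_0^t S(t-s)f\big(w(s)\big)\,\dx s\le S(t)\psi\int_0^t h(s)\,\dx s\qquad\text{for every }t\in[0,T],
\]
so essentially the whole proof is to establish this single inequality; the dominated function $\F[w]$ is then automatically in $\M$ once $w$ is.

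The key observation is that $w(s)=S(s)\phi+S(s)\psi\int_0^s h(r)\,\dx r$ is \emph{exactly} the argument of $f$ on the left-hand side of hypothesis~(\ref{crit:eq2}) read with $t$ replaced by $s$. Since $0\le s\le t\le T$, hypothesis~(\ref{crit:eq2}) is available at time $s$ and gives the pointwise bound $f(w(s))\le h(s)\,S(s)\psi$ for a.e.\ $x\in\Omega$, for each $s\in[0,t]$. It is worth stressing that one must invoke the hypothesis at every intermediate time $s$, not merely at the endpoint $t$; this is legitimate precisely because the inner integral defining $w(s)$ runs only up to $s$, matching the structure of (\ref{crit:eq2}), and because $[0,t]\subseteq[0,T]$.

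The remaining steps are routine. Applying $S(t-s)$ to both sides of $f(w(s))\le h(s)S(s)\psi$ and using that $S(t-s)$ is linear and positivity-preserving (the same two facts used in the proof of Theorem~\ref{thm:main}), together with $h(s)\ge0$, yields $S(t-s)f(w(s))\le h(s)\,S(t-s)S(s)\psi=h(s)\,S(t)\psi$, where the last equality is the semigroup property $S(t-s)S(s)=S(t)$. Integrating in $s$ over $[0,t]$ and pulling the $s$-independent factor $S(t)\psi$ outside the integral gives exactly the Duhamel bound displayed above. To finish, one records that $w\in\M$: nonnegativity follows from $K\ge0$, $\phi,\psi\ge0$ and $h\ge0$; a.e.\ finiteness follows from $\phi,\psi\in L^{1}_{+}(\Omega)$, so that $S(t)\phi$ and $S(t)\psi$ are finite a.e., together with $\int_0^T h<\infty$; joint measurability in $(t,x)$ follows from measurability of the kernel $K$ and of $h$.

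Honestly, there is no serious obstacle here — the statement is in essence a bookkeeping identity held together by the semigroup property, and notably the monotonicity of $f$ is not even used in this particular argument (only continuity, implicitly, for measurability of $f(w(s))$, and the structural bound (\ref{crit:eq2})). The one point that merits a moment's care is the one flagged above: that (\ref{crit:eq2}) must be applied at all times $s\in[0,t]$ and that its left-hand side is precisely $f(w(s))$. Once that is seen, linearity, positivity and the semigroup property finish the proof, with no smoothing estimate, contraction mapping, or bootstrap required.
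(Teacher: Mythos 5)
Your proof is correct and follows essentially the same route as the paper's: apply hypothesis (\ref{crit:eq2}) at each intermediate time $s$, use positivity of $S(t-s)$ and the semigroup identity $S(t-s)S(s)\psi=S(t)\psi$, and pull the scalar integral of $h$ outside. The only addition is your explicit (and harmless) verification that $w\in\M$, which the paper leaves implicit.
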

\begin{proof}
For $t\in [0,T]$ we have 
\begin{align*}
\F[w](t)&= S(t)\phi+\int_{0}^{t}S(t-s)f\left(S(s)\phi+S(s)\psi\int_{0}^{s}h(r)\dx r\right)\dx s\\
&\leq S(t)\phi+\int_{0}^{t}S(t-s) h(s)S(s)\psi\dx s.
\end{align*}
Since $h$ does not depend on the space variables and due to the definition of the heat semigroup we can write
\begin{align*}
\int_{0}^{t}S(t-s)h(s)S(s)\psi\dx s&=\int_{0}^{t}h(s)S(t-s)S(s)\psi\dx s\\
&=\int_{0}^{t}h(s)S(t)\psi\dx s=S(t)\psi\int_{0}^{t}h(s)\dx s
\end{align*}
and so 
\[
\F[w](t)\leq S(t)\phi+S(t)\psi\int_{0}^{t}h(s)\dx s
\]
for $t\in [0,T]$ i.e.\ $\F[w]\leq w$ on $\Omega_{T}$ as required. 
\end{proof}
The next step is to propose a way of defining function $h$. 
\begin{proposition}\label{prop:main}
Let $\phi\in L^{1}_{+}(\Omega)$ and suppose that there exist constants $A>1$, $T>0$ and functions $\psi\in L^{1}_{+}(\Omega)$, $g:[0,\infty)\mapsto [0,\infty)$ and $h:[0,T]\mapsto [0,\infty)$ such that  
\begin{equation}\label{cond1}
S(t)\phi\leq g\left(S(t)\psi\right)\mbox{ and }\quad 1+\left\|\frac{S(t)\psi}{g(S(t)\psi)}\right\|_{L^{\infty}(\Omega)} \int_{0}^{t}h(s)\dx s\leq A
\end{equation}
for $t\in [0,T]$, where 
\[
h(t)=\left\|\frac{f\left(A g\left(S(t)\psi\right)\right)}{S(t)\psi}\right\|_{L^{\infty}(\Omega)}.
\]
Then $w(t)=S(t)\phi+S(t)\psi \int_{0}^{t}h(s)\dx s$ is a supersolution for $t\in [0,T]$.
\end{proposition}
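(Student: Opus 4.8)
The plan is to deduce Proposition~\ref{prop:main} directly from Proposition~\ref{crit:prop}: all that has to be shown is that, with the $\psi$ and $h$ given in the statement, the pointwise inequality (\ref{crit:eq2}) holds on $\Omega_{T}$, after which Proposition~\ref{crit:prop} identifies $w(t)=S(t)\phi+S(t)\psi\int_{0}^{t}h(s)\dx s$ as a supersolution with no further work. So the whole argument reduces to bounding the argument of $f$ on the left of (\ref{crit:eq2}) from above by $A\,g(S(t)\psi)$ and then reading off the definition of $h$.

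Concretely, fix $t\in(0,T]$ (the slice $t=0$ is null in $\Omega_{T}$ and may be ignored, since $\F[w]\le w$ is only required a.e.). First I would use the first inequality in (\ref{cond1}), $S(t)\phi\le g(S(t)\psi)$, to replace $S(t)\phi$ in the argument of $f$. Then, writing $S(t)\psi=\frac{S(t)\psi}{g(S(t)\psi)}\,g(S(t)\psi)$ and bounding the quotient by its $L^{\infty}(\Omega)$ norm (using $g\ge 0$), I get pointwise on $\Omega$
\[
S(t)\phi+S(t)\psi\int_{0}^{t}h(s)\dx s\le g\big(S(t)\psi\big)\!\left(1+\left\|\frac{S(t)\psi}{g(S(t)\psi)}\right\|_{L^{\infty}(\Omega)}\int_{0}^{t}h(s)\dx s\right)\le A\,g\big(S(t)\psi\big),
\]
the last step being exactly the second condition in (\ref{cond1}). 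Applying the nondecreasing $f$ and then factoring $S(t)\psi$ out of the resulting bound gives
\[
f\!\left(S(t)\phi+S(t)\psi\int_{0}^{t}h(s)\dx s\right)\le f\big(A\,g(S(t)\psi)\big)\le\left\|\frac{f(A\,g(S(t)\psi))}{S(t)\psi}\right\|_{L^{\infty}(\Omega)}S(t)\psi=h(t)\,S(t)\psi,
\]
which is precisely (\ref{crit:eq2}); Proposition~\ref{crit:prop} then finishes the proof.

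The chain of inequalities is essentially algebraic once the pointwise ordering is in place, so the only genuine points requiring care — and what I would regard as the main (if modest) obstacle — are the measure‑theoretic legitimacy of the manipulations: that $S(t)\psi>0$ a.e.\ in $\Omega$ for $t>0$, which follows from the strict positivity of the heat kernel $K$, so the quotient defining $h$ makes sense; that $g(S(t)\psi)>0$ a.e., which is forced implicitly by the second condition in (\ref{cond1}) (if $g(S(t)\psi)$ vanished on a set of positive measure the $L^{\infty}$ norm there would be infinite, contradicting the bound by $A$), so the quotient $S(t)\psi/g(S(t)\psi)$ is well defined; and that $h$ is measurable and integrable on $[0,T]$, the integrability again being guaranteed by the second condition in (\ref{cond1}), which bounds $\int_{0}^{t}h$ uniformly for $t\in[0,T]$. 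Once these routine points are dispatched, the hypothesis of Proposition~\ref{crit:prop} is met verbatim and the conclusion follows.
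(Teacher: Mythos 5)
Your proposal is correct and follows essentially the same route as the paper: both reduce the claim to verifying inequality (\ref{crit:eq2}) via the chain $S(t)\phi+S(t)\psi\int_{0}^{t}h\le g(S(t)\psi)\bigl(1+\|S(t)\psi/g(S(t)\psi)\|_{L^{\infty}(\Omega)}\int_{0}^{t}h\bigr)\le A\,g(S(t)\psi)$, then apply the monotonicity of $f$ and the definition of $h$, and finally invoke Proposition~\ref{crit:prop}. Your additional remarks on the a.e.\ positivity of $S(t)\psi$ and the integrability of $h$ address points the paper leaves implicit, but do not change the argument.
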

\begin{proof}
The conditions listed above serve to ensure that the inequality (\ref{crit:eq2}) holds. We have 
\begin{align*}
f\left(S(t)\phi+S(t)\psi\int_{0}^{t}h(s)\dx s\right)&\leq f\left(g\left(S(t)\psi\right)+S(t)\psi\int_{0}^{t}h(s)\dx s\right)\\
&\leq f\left(g\left(S(t)\psi\right)\left(1+\left\|\frac{S(t)\psi}{g(S(t)\psi)}\right\|_{L^{\infty}(\Omega)}\int_{0}^{t}h(s)\dx s\right)\right)\\
&\leq f\left(A g\left(S(t)\psi\right)\right)\leq h(t)S (t)\psi.
\end{align*}
\end{proof}
The function $g$ used above is intended to represent an operation that allows us to extract the presupposed regularity of the initial condition. In the next section we will see how one may use the Jensen inequality to take advantage of the $L^{q}$- integrability of $\phi$. More precisely Lemma \ref{lem:jen} informs us that for $r\geq 1$ and $\phi$ nonnegative we have $(S(t)\phi)^{r}\leq S(t)\phi^{r}$. Thus for $\phi\in L^{q}_{+}(\Omega)$ we can write 
\[
S(t)\phi=\left(\left(S(t)\phi\right)^{q}\right)^{\frac{1}{q}}\leq \left(S(t)\phi^{q}\right)^{\frac{1}{q}}
\]
so that the correspondence to the above is given by $\psi=\phi^{q}$ and $g(s)=s^{1/q}$. \par
Observe that in the results above the applicability of a given supersolution is valid as long as the inequalities (\ref{crit:eq2}) and (\ref{cond1}) hold. Hence if we consider the set of initial conditions for which $T=\infty$ then our results turn into the statements of global existence of classical solutions for small data. In this case the global supersolution $w(t)=S(t)\phi+S(t)\psi \int_{0}^{t}h(s)\dx s$ may be for example used to obtain a pointwise bound on the asymptotic profile of the solution. 
Note however that the smallness of data is understood in the sense of said inequalities and  may differ from the statements found in the literature where the smallness is sometimes defined in terms of the norm of the initial condition in some relevant functional setting, cf. \cite{SOUPLET,WEISSLER1981}.   
\section{Example - $f(u)=u^{p}$}\label{example}
In this section we restrict our attention to the case $\phi\in L^{q}_{+}(\Omega)$, $q\geq 1$. For $p>1$ consider the following model problem: 
\begin{align}\label{model}
&u_{t}=\Delta u+u^{p}\quad \mbox{ in }\quad\Omega,\ t>0\nonumber,\\
&u=0 \quad \mbox{ on }\quad\partial \Omega,\\
&u(0)=\phi \quad\mbox{ in }\quad\Omega.\nonumber
\end{align}
Existence and uniqueness results for this problem were established by Weissler \cite{WEISSLER1980} and then augmented by Brezis and Cazenave \cite{BREZISCAZENAVE1996}. For comparison purposes we present an abridged version of their findings concentrating only on the existence of classical solutions for nonnegative data.  
\begin{theorem}\label{thm:weiss}
Let $\phi\in L^{q}_{+}(\Omega)$, $q\geq 1$, and suppose that one of the following cases holds:
\begin{enumerate}
\item supercritical: $q>n(p-1)/2$,
\item critical: $q=n(p-1)/2>1$, 
\item subcritical: $n(p-1)/2 p<q<n(p-1)/2$ and 
\begin{equation}\label{conv}
\lim_{t\to 0}\left\|t^{\alpha}S(t)\phi\right\|_{L^{p q}(\Omega)}=0, \quad \mbox{where }\alpha=\frac{1}{p-1}-\frac{n}{2 p q},  
\end{equation}
\end{enumerate}
then there exists $T=T(\phi)>0$ such that the problem (\ref{model}) has a local classical solution on $\Omega_{T}$.  
\end{theorem}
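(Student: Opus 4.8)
The plan is to reduce everything to Theorem~\ref{thm:main}: it suffices to produce, for a suitable $T=T(\phi)>0$, a supersolution $w$ on $\Omega_T$ that is finite‑valued and bounded for each fixed $t>0$. Theorem~\ref{thm:main} then delivers a solution $u=\lim_k\F^k[w]\le w$ of (\ref{eq:VoC}); since $0\le u\le w$, $u$ is bounded on $\Omega\times[\tau,T]$ for every $\tau>0$, and the parabolic regularity theory quoted in \cite{SOUPLET} upgrades such a locally bounded mild solution to a classical one. So the theorem amounts to building a bounded supersolution in each of the three regimes, and the only analytic input is the heat‑semigroup smoothing estimate $\|S(t)g\|_{L^\infty(\Omega)}\le C\,t^{-n/(2r)}\|g\|_{L^r(\Omega)}$, together with the embedding $\|g\|_{L^r(\Omega)}\le C_\Omega\|g\|_{L^q(\Omega)}$ for $r\le q$ on the bounded domain $\Omega$. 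The construction itself is the one prepared in Propositions~\ref{crit:prop} and~\ref{prop:main}, combined with Jensen's inequality (Lemma~\ref{lem:jen}).

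\emph{Supercritical case.} Take $\psi=\phi$ and $g=\mathrm{id}$ in Proposition~\ref{prop:main}, so that $h(t)=A^p\|(S(t)\phi)^{p-1}\|_{L^\infty(\Omega)}=A^p\|S(t)\phi\|_{L^\infty(\Omega)}^{p-1}$ and the sole condition to verify is $1+\int_0^t h(s)\,\dx s\le A$. By the $L^q$--$L^\infty$ estimate $\|S(s)\phi\|_{L^\infty(\Omega)}\le Cs^{-n/(2q)}\|\phi\|_{L^q(\Omega)}$ one gets $\int_0^t h(s)\,\dx s\le cA^p\|\phi\|_{L^q(\Omega)}^{p-1}\,t^{\,1-n(p-1)/(2q)}$, whose exponent is positive precisely when $q>q_c=n(p-1)/2$; fixing $A=2$ and choosing $T$ so small that the right‑hand side is $\le1$ gives the supersolution. (The Jensen choice $\psi=\phi^q$, $g(s)=s^{1/q}$ suggested after Proposition~\ref{prop:main} leads to the same threshold.)

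\emph{Critical case $q=q_c>1$.} Now the exponent above is $0$, so the same estimate yields a supersolution on a fixed interval only for small $\|\phi\|_{L^q(\Omega)}$. To reach arbitrary $\phi\in L^q_+(\Omega)$ I would use the density of $L^\infty(\Omega)\cap L^q(\Omega)$ in $L^q(\Omega)$ to split $\phi=\phi_1+\phi_2$ with $\phi_1\in L^\infty_+(\Omega)$ and $\|\phi_2\|_{L^q(\Omega)}$ as small as desired, and then, \emph{for the inflated nonlinearity} $2^{p-1}s^p$, build on a common interval $[0,T]$ a supersolution $W_1$ for the data $\phi_1$ (a large constant works, since $S(t)\phi_1+\int_0^t S(t-s)2^{p-1}M^p\,\dx s\le\|\phi_1\|_{L^\infty(\Omega)}+2^{p-1}M^p t\le M$ for $t$ small) and a supersolution $W_2$ for $\phi_2$ by the small‑data argument just described (the extra factor $2^{p-1}$ is harmless once $\|\phi_2\|_{L^q(\Omega)}$ is small enough). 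Because $(a+b)^p\le2^{p-1}(a^p+b^p)$, the sum $W=W_1+W_2$ is then a supersolution of the genuine problem with data $\phi$, and it is bounded for $t>0$. When $q>p$ (possible here for $n\ge3$) the quotient $f(Ag(S(t)\psi))/S(t)\psi$ in Proposition~\ref{prop:main} is unbounded near $\partial\Omega$, where $S(t)\psi$ vanishes; one circumvents this by checking the inequality (\ref{crit:eq2}) of Proposition~\ref{crit:prop} directly, noting that near $\partial\Omega$ both $S(t)\phi$ and $S(t)\psi$ vanish to the same (first) order, so that since $p>1$ the inequality holds automatically there and only the interior, where the $L^\infty$ bounds apply, is at issue.

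\emph{Subcritical case.} The smoothing estimate above is scaling‑invariant and cannot see $q<q_c$, which is exactly why the extra hypothesis (\ref{conv}) is needed. The entry point is the observation that $q>q_c/p$ is equivalent to $pq>q_c$, so the Lebesgue index $pq$ is \emph{supercritical}, while (\ref{conv}) says that $\|S(t)\phi\|_{L^{pq}(\Omega)}$ decays slightly faster than the scaling rate $t^{-\alpha}$. Combining this with $\|S(t)\phi\|_{L^\infty(\Omega)}\le C\,t^{-n/(2pq)}\|S(t/2)\phi\|_{L^{pq}(\Omega)}$ and the identity $n/(2pq)+\alpha=1/(p-1)$ gives $\|S(t)\phi\|_{L^\infty(\Omega)}\le\eta(t)\,t^{-1/(p-1)}$ with $\eta(t)\to0$ as $t\to0$, which one feeds into the supercritical construction — or, equivalently, into the Duhamel term $\int_0^t S(t-s)f(S(s)\phi)\,\dx s$ estimated in $L^{pq}(\Omega)$, where the convolution exponents $n(p-1)/(2pq)$ and $p\alpha$ are both $<1$ with sum $\alpha+1$ — to make $\int_0^t h$ small. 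I expect this last point to be the main obstacle: (\ref{conv}) provides only a borderline improvement over the scaling rate, and turning it into a genuine small supersolution requires propagating the $L^{pq}$‑regularity of $S(t)\phi$ through the integral term — essentially the bootstrap that the classical proof in \cite{BREZISCAZENAVE1996} performs — while keeping the construction within the reach of Propositions~\ref{crit:prop} and~\ref{prop:main}.
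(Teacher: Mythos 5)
Your reduction to Theorem~\ref{thm:main} and your treatment of cases 1 and 2 are sound, but note that the paper does not prove this theorem from scratch: it is quoted from \cite{WEISSLER1980,BREZISCAZENAVE1996}, and the paper's own supersolution machinery then \emph{recovers} it. For the supercritical case your computation with $w=AS(t)\phi$ is exactly the paper's. For the critical case the paper instead combines condition (\ref{condp}) of Theorem~\ref{prop:lq} with Lemma~\ref{lem:smooth} applied to $\phi^{q}\in L^{1}_{+}(\Omega)$, which supplies an arbitrarily small constant $K$ in $t^{n/2}\|S(t)\phi^{q}\|_{L^{\infty}(\Omega)}\le K$ for small times and hence closes the borderline exponent; your route via the splitting $\phi=\phi_{1}+\phi_{2}$, the inflated nonlinearity $2^{p-1}s^{p}$ and additivity of supersolutions is a valid alternative that essentially inlines the density argument behind Lemma~\ref{lem:smooth}. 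Your observation that the quotient defining $h$ in Proposition~\ref{prop:main} degenerates near $\partial\Omega$ when $q>p$ is a fair point which the paper itself passes over silently.

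The genuine gap is case 3. From (\ref{conv}) you correctly deduce $\|S(t)\phi\|_{L^{\infty}(\Omega)}\le\eta(t)\,t^{-1/(p-1)}$ with $\eta(t)\to0$, but feeding this into the supercritical construction requires smallness of $\int_{0}^{t}\|S(s)\phi\|_{L^{\infty}(\Omega)}^{p-1}\,\dx s$, and the resulting bound $\int_{0}^{t}\eta(s)^{p-1}s^{-1}\,\dx s$ need not even be finite (take $\eta(s)^{p-1}=1/\log(1/s)$): an $o(t^{-1/(p-1)})$ sup-norm decay is strictly insufficient for the $q=1$-type supersolution, and you acknowledge the obstacle without resolving it. The missing idea is to deploy the Jensen substitution $\psi=\phi^{q}$, $g(s)=s^{1/q}$ of Lemma~\ref{lem:jen} \emph{in this case as well}: the integrand in (\ref{condp}) then becomes $\|S(s)\phi^{q}\|_{L^{\infty}(\Omega)}^{(p-q)/q}\lesssim s^{-(p-q)/(p-1)}$, whose exponent is strictly less than $1$ precisely because $q>1$, so the time integral converges with the compensating prefactor $t^{-(q-1)/(p-1)}$; this is the content of Proposition~\ref{suff}. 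Even with that step, the hypothesis which makes it work is $t^{\alpha}\|S(t)\phi^{q}\|_{L^{p}(\Omega)}^{1/q}\to0$ (condition (\ref{uff})), which by Lemma~\ref{lem:jen} is stronger than (\ref{conv}); the paper explicitly leaves open whether (\ref{conv}) alone is reachable by supersolutions, and for the theorem as literally stated it defers to the contraction-plus-bootstrap proof of \cite{BREZISCAZENAVE1996}. So your proposal establishes cases 1 and 2, but for case 3 it neither closes the integral estimate nor isolates the weaker sufficient condition that the paper actually proves.
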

As mentioned in the introduction the method of proof relies on the contraction argument in a carefully chosen space of curves followed by the bootstrap argument for regularity. In \cite{WEISSLER1980} the supercritical case was also resolved using positivity of the heat semigroup but the critical range was not covered.  Here we present a simple way of extending the positivity-based method to include critical and subcritical ranges. \par 
Before we present the full result let us see how the local existence and regularity in the supercritical range $n(p-1)/2<q$ can be recovered in a short direct computation. To do this it suffices to use the standard $L^{q}-L^{r}$ smoothing estimate \cite{CAZENAVEBOOK}: 
\begin{equation}\label{stsmooth}
\|S(t)\phi\|_{L^{r}(\Omega)}\leq t^{-\frac{n}{2 }\left(\frac{1}{q}-\frac{1}{r}\right)}\|\phi\|_{L^{q}(\Omega)}, 
\end{equation}
where $1\leq q\leq r\leq \infty$. \par
Let $A>1$ be a constant. We will show that in this case $A S(t)\phi$ is a supersolution for (\ref{model}) on some sufficiently small time interval. We have 
\begin{align*}
\F[A S(\cdot)\phi](t)&\leq S(t)\phi+\int_{0}^{t}S(t-s)\left(A^{p}\|S(s)\phi\|_{L^{\infty}(\Omega)}^{p-1}S(s)\phi\right)\dx s\\
&=S(t)\phi\left(1+A^{p}\int_{0}^{t}\|S(s)\phi\|_{L^{\infty}(\Omega)}^{p-1}\dx s\right)\leq A S(t)\phi,
\end{align*}
whenever $\int_{0}^{t}\|S(s)\phi\|_{L^{\infty}(\Omega)}^{p-1}\dx s$ is finite and $t$ is small enough. In the considered supercritical range the smoothing estimate yields 
\[
\int_{0}^{t}\|S(s)\phi\|_{L^{\infty}(\Omega)}^{p-1}\dx s\leq \|\phi\|_{L^{q}(\Omega)}^{p-1}t^{1-\frac{n(p-1)}{2 q}}
\]
and for every $\phi\in L^{q}_{+}(\Omega)$ there exists a time $T=T\left(\|\phi\|_{L^{q}(\Omega)}\right)$ such that
\begin{equation}\label{eq:smth}
1+A^{p}\|\phi\|_{L^{q}(\Omega)}^{p-1}t^{1-\frac{n(p-1)}{2 q}}\leq A
\end{equation}
for $t\in [0,T]$. \par
It is worth mentioning that in order to obtain the global supersolution for small data the standard smoothing estimate is not enough. For every $A>1$ and $\|\phi\|_{L^{q}(\Omega)}$ there will be a time such that (\ref{eq:smth}) ceases to hold. However the global validity of the supersolution depends on the smallness of the $\int_{0}^{\infty}\|S(s)\phi\|_{L^{\infty}(\Omega)}^{p-1}\dx s$ rather than the bound obtained with the smoothing estimate. The explanation of the discrepancy lies in the fact that the smoothing estimate works well for small times but is too crude for large times. When $t$ is large one should use the bound involving exponential decay, see \cite{SOUPLET} p. 441. We will now leave the discussion of global supersolutions and come back to it after the more general result is presented. \par   
In order to include critical and subcritical cases a slightly subtler choice of the supersolutions is required. 
We need two additional results. The first one is a particular case of the standard smoothing estimates, see \cite{BREZISCAZENAVE1996} for the full statement and the proof. 

\begin{lemma}\label{lem:smooth}
Given $\phi\in L^{q}(\Omega)$, $1\leq  q<\infty$, then for every $K>0$ there exits a time $\tau=\tau(\phi,K)>0$ such that 
\begin{equation}\label{smooth}
t^{\frac{n}{2 q}}\|S(t)\phi\|_{L^{\infty}(\Omega)}\leq K
\end{equation}
for $t\in [0,\tau]$. 
\end{lemma}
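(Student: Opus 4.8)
The plan is to recover the $L^\infty$ smoothing estimate \eqref{smooth} for short times directly from the standard $L^q$–$L^r$ estimate \eqref{stsmooth}, using a density argument to kill the constant as $t\to 0$. The point is that \eqref{stsmooth} with $r=\infty$ gives $\|S(t)\phi\|_{L^\infty(\Omega)}\le t^{-n/(2q)}\|\phi\|_{L^q(\Omega)}$, i.e.\ $t^{n/(2q)}\|S(t)\phi\|_{L^\infty(\Omega)}\le\|\phi\|_{L^q(\Omega)}$ uniformly in $t>0$; but $\|\phi\|_{L^q(\Omega)}$ need not be $\le K$, so a bare application of \eqref{stsmooth} is not enough. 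The remedy is to exploit that $S(t)$ is strongly continuous on $L^q(\Omega)$ together with the fact that bounded data behave well for small times.

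First I would split $\phi=\phi_1+\phi_2$ where $\phi_1\in L^\infty(\Omega)\cap L^q(\Omega)$ and $\|\phi_2\|_{L^q(\Omega)}$ is as small as we wish; this is possible by density of $L^\infty\cap L^q$ in $L^q$ (e.g.\ truncate $\phi$ at height $M$ and let $M\to\infty$). By linearity and positivity of the semigroup, $t^{n/(2q)}\|S(t)\phi\|_{L^\infty(\Omega)}\le t^{n/(2q)}\|S(t)\phi_1\|_{L^\infty(\Omega)}+t^{n/(2q)}\|S(t)\phi_2\|_{L^\infty(\Omega)}$. For the second term, \eqref{stsmooth} with $r=\infty$ and $q$ gives the bound $\|\phi_2\|_{L^q(\Omega)}$, which we have arranged to be $\le K/2$. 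For the first term, $\|S(t)\phi_1\|_{L^\infty(\Omega)}\le\|\phi_1\|_{L^\infty(\Omega)}$ since the heat semigroup is a contraction on $L^\infty(\Omega)$, hence $t^{n/(2q)}\|S(t)\phi_1\|_{L^\infty(\Omega)}\le t^{n/(2q)}\|\phi_1\|_{L^\infty(\Omega)}\to 0$ as $t\to 0$; so there is $\tau>0$ with this term $\le K/2$ for $t\in[0,\tau]$. Adding the two estimates gives \eqref{smooth} on $[0,\tau]$, with $\tau=\tau(\phi,K)$ depending on $\phi$ through the choice of the splitting and then on $\|\phi_1\|_{L^\infty(\Omega)}$.

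The main obstacle, such as it is, is purely bookkeeping: making the dependence $\tau=\tau(\phi,K)$ explicit and checking that the contraction property of $S(t)$ on $L^\infty(\Omega)$ and the boundedness $L^q\to L^\infty$ in \eqref{stsmooth} are both legitimately available (they are — the first from the maximum principle / positivity of the kernel with $\int_\Omega K(t,x,y)\,\dx y\le 1$, the second is quoted from \cite{CAZENAVEBOOK}). One should also note that the estimate is genuinely one-sided in $t$: as already observed after \eqref{eq:smth}, for large $t$ one needs the exponentially decaying bound instead, but the statement only claims \eqref{smooth} on a small interval $[0,\tau]$, so no issue arises. An alternative, essentially equivalent route is to invoke that $t^{n/(2q)}S(t)$ is bounded $L^q\to L^\infty$ and that the map $t\mapsto t^{n/(2q)}\|S(t)\phi\|_{L^\infty(\Omega)}$ has limit zero at $t=0$ whenever $\phi$ lies in the closure of $L^\infty\cap L^q$ in $L^q$, which is all of $L^q$; this is the content of \cite{BREZISCAZENAVE1996} and we simply cite it for the general statement.
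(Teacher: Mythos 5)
Your argument is correct and complete: the truncation splitting $\phi=\phi_1+\phi_2$ with $\phi_1$ bounded and $\|\phi_2\|_{L^q(\Omega)}\leq K/2$, combined with the $L^q$--$L^\infty$ estimate (\ref{stsmooth}) for $\phi_2$ and the $L^\infty$-contraction property for $\phi_1$, is exactly the standard proof of this fact. The paper itself gives no proof, deferring entirely to \cite{BREZISCAZENAVE1996}, and the argument found there is precisely the one you have written out.
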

The second result involves the interplay between convex functions and the heat semigroup as found in \cite{WEISSLER1979,WEISSLER1986}. 
\begin{lemma}\label{lem:jen}
Let $\phi\geq 0$ be a measurable function on $\Omega$ and $r\geq 1$, then 
\begin{equation}\label{eq:jen}
(S(t)\phi)^{r}\leq S(t)\phi^{r}.
\end{equation}  
\end{lemma}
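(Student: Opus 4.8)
The plan is to derive (\ref{eq:jen}) from Jensen's inequality applied to the probability measure obtained by normalising the heat kernel; the one subtlety is that, because of the homogeneous Dirichlet condition, the kernel only defines a \emph{sub}-probability measure, and this defect has to be tracked. The first step I would take is to record the sub-Markovian bound: since the constant function $1$ is a supersolution of the linear Dirichlet heat equation, the maximum principle (or the comparison $S(t)1\le 1$ together with positivity of $K$, see \cite{SOUPLET}) gives
\[
m(t,x):=\int_{\Omega}K(t,x,y)\,\dx y=(S(t)1)(x)\le 1\qquad\text{for all }x\in\overline\Omega,\ t>0.
\]

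Next, fixing $t>0$ and $x\in\overline\Omega$, I would split into cases. If $m(t,x)=0$ then $K(t,x,\cdot)=0$ a.e.\ and both sides of (\ref{eq:jen}) are zero. If $m(t,x)>0$, then $\dx\nu(y):=m(t,x)^{-1}K(t,x,y)\,\dx y$ is a probability measure on $\Omega$, so the convexity of $s\mapsto s^{r}$ on $[0,\infty)$ for $r\ge 1$ and Jensen's inequality give
\[
\Bigl(\,\int_{\Omega}\phi\,\dx\nu\,\Bigr)^{r}\le \int_{\Omega}\phi^{r}\,\dx\nu,
\]
i.e.\ $m(t,x)^{-r}(S(t)\phi)(x)^{r}\le m(t,x)^{-1}(S(t)\phi^{r})(x)$. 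Multiplying through by $m(t,x)^{r}$ and using $0<m(t,x)\le 1$ together with $r-1\ge 0$, so that $m(t,x)^{r-1}\le 1$, yields
\[
(S(t)\phi)(x)^{r}\le m(t,x)^{r-1}(S(t)\phi^{r})(x)\le (S(t)\phi^{r})(x),
\]
which is the claimed inequality.

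To handle a general nonnegative measurable $\phi$, for which the integrals above may be infinite, I would first carry out this argument for the truncations $\phi_{N}=\min(\phi,N)$ and then let $N\to\infty$, passing the limit through on both sides by the monotone convergence theorem since $K\ge 0$. I do not expect any serious obstacle here; the only point requiring care is the book-keeping with $m(t,x)$, since the Dirichlet heat semigroup is merely sub-Markovian and Jensen's inequality for a probability measure is therefore not directly applicable. The factor $m(t,x)^{r-1}\le 1$ absorbs exactly this defect, and in the borderline setting $\Omega=\R^{n}$ one has $m\equiv 1$ and (\ref{eq:jen}) reduces to Jensen's inequality outright.
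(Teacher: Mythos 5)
Your proof is correct and is essentially the argument the paper relies on: the paper gives no proof of Lemma \ref{lem:jen}, citing instead \cite{WEISSLER1979,WEISSLER1986}, where the inequality is obtained exactly as you do, by applying Jensen's inequality to the kernel measure $K(t,x,y)\,\dx y$. Your explicit treatment of the sub-Markovian defect --- normalising by $m(t,x)=(S(t)1)(x)\le 1$ and absorbing the factor $m(t,x)^{r-1}\le 1$ using $r\ge 1$ --- is precisely the point that needs checking on a Dirichlet domain, and you have it right.
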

The main result of this section follows from a combination of the Proposition \ref{prop:main} and the above mentioned lemmas. 
\begin{theorem}\label{prop:lq}
If 
\begin{equation}\label{condp}
\left\|S(t)\phi^{q}\right\|_{L^{\infty}(\Omega)}^{\frac{q-1}{q}}\int_{0}^{t}\left\|S(s)\phi^{q}\right\|_{L^{\infty}(\Omega)}^{\frac{p-q}{q}}\dx s\leq C_{p}
\end{equation}
for $t\in [0,T]$, $0<T\leq \infty$, where $C_{p}=\frac{(p-1)^{p-1}}{p^{p}}$, then problem (\ref{model}) has a classical solution on $\Omega_{T}$. 
\end{theorem}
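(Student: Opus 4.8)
The plan is to verify that the hypotheses of Proposition~\ref{prop:main} are satisfied with the natural choices dictated by the discussion following that proposition, namely $\psi = \phi^{q}$, $g(s) = s^{1/q}$, and $f(s) = s^{p}$, and then to identify the constant $A$ and the function $h$ explicitly so that condition~(\ref{cond1}) reduces exactly to~(\ref{condp}). First I would record that by Lemma~\ref{lem:jen} (applied with $r = q$) we have $S(t)\phi = (S(t)\phi)^{q\cdot(1/q)} \le (S(t)\phi^{q})^{1/q} = g(S(t)\psi)$, so the first part of~(\ref{cond1}) holds with no further work. Next, with these choices one computes
\[
\frac{S(t)\psi}{g(S(t)\psi)} = \frac{S(t)\phi^{q}}{(S(t)\phi^{q})^{1/q}} = (S(t)\phi^{q})^{\frac{q-1}{q}},
\qquad
h(t) = \left\| \frac{\big(A(S(t)\phi^{q})^{1/q}\big)^{p}}{S(t)\phi^{q}} \right\|_{L^{\infty}} = A^{p}\,\big\|(S(t)\phi^{q})^{\frac{p-q}{q}}\big\|_{L^{\infty}},
\]
so that $\int_{0}^{t} h(s)\,\dx s = A^{p}\int_{0}^{t}\|S(s)\phi^{q}\|_{L^{\infty}}^{\frac{p-q}{q}}\,\dx s$ and the quantity appearing in the second inequality of~(\ref{cond1}) becomes $1 + A^{p}\,\|S(t)\phi^{q}\|_{L^{\infty}}^{\frac{q-1}{q}}\int_{0}^{t}\|S(s)\phi^{q}\|_{L^{\infty}}^{\frac{p-q}{q}}\,\dx s$.

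The second step is the optimisation over $A$. The hypothesis~(\ref{condp}) gives the bound $\|S(t)\phi^{q}\|_{L^{\infty}}^{(q-1)/q}\int_{0}^{t}\|S(s)\phi^{q}\|_{L^{\infty}}^{(p-q)/q}\,\dx s \le C_{p}$, so it suffices to choose $A > 1$ such that $1 + A^{p} C_{p} \le A$, i.e.\ $A - A^{p}C_{p} \ge 1$. I would maximise $\varphi(A) = A - A^{p}C_{p}$ over $A > 1$: differentiating gives $\varphi'(A) = 1 - p C_{p} A^{p-1} = 0$ at $A_{*} = (p C_{p})^{-1/(p-1)}$, and plugging in the given value $C_{p} = (p-1)^{p-1}/p^{p}$ yields $A_{*} = p/(p-1) > 1$ and $\varphi(A_{*}) = A_{*}(1 - C_{p} A_{*}^{p-1}) = \frac{p}{p-1}\big(1 - \frac{1}{p}\big) = 1$. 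Hence with $A = A_{*} = p/(p-1)$ we get exactly $1 + A^{p}C_{p} = A$, so the second inequality in~(\ref{cond1}) holds for all $t \in [0,T]$.

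With both parts of~(\ref{cond1}) verified, Proposition~\ref{prop:main} yields that $w(t) = S(t)\phi + S(t)\phi^{q}\int_{0}^{t} h(s)\,\dx s$ is a supersolution on $\Omega_{T}$, and Theorem~\ref{thm:main} then produces a solution $u \le w$ of the integral formulation~(\ref{eq:VoC}) on $\Omega_{T}$. To upgrade to a \emph{classical} solution I would invoke the regularity remark from the introduction: it is enough that $w$ be bounded for positive times, which follows from Lemma~\ref{lem:smooth} applied to $\phi^{q} \in L^{1}(\Omega)$ (controlling $\|S(t)\phi^{q}\|_{L^{\infty}}$ for small $t$) together with~(\ref{condp}) (controlling the time integral), so $u$ is bounded on $\Omega \times [\delta, T']$ for every $0 < \delta < T' < T$ and standard parabolic regularity makes it classical. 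The main obstacle is really just the bookkeeping in matching $h$ and the exponents so that~(\ref{cond1}) collapses precisely onto~(\ref{condp}); the only genuinely substantive point is recognising that the choice $C_{p} = (p-1)^{p-1}/p^{p}$ is exactly the threshold that makes $\max_{A>1}(A - A^{p}C_{p}) = 1$, which is what forces the constant in~(\ref{condp}) to take that particular form.
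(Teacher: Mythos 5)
Your proof is correct and takes essentially the same route as the paper: both apply Proposition \ref{prop:main} with $\psi=\phi^{q}$ and $g(s)=s^{1/q}$ via Lemma \ref{lem:jen}, and both reduce the conclusion to the observation that the optimal choice $A=p/(p-1)$ turns the constraint $1+A^{p}C_{p}\leq A$ (equivalently, $\max_{A>1}(A-1)/A^{p}=C_{p}$) into exactly the hypothesis (\ref{condp}). Your extra bookkeeping on the explicit form of $h$ and the remark on upgrading to a classical solution via boundedness of the supersolution simply spell out steps the paper leaves implicit.
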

\begin{proof}
We simply identify the elements of the Proposition \ref{prop:main} in the current setting. Lemma \ref{lem:jen} provides us with the explicit choice of $g(s)=s^{1/q}$ and $\psi=\phi^{q}$ so that the supersolution is given by 
\[
w(t)=S(t)\phi+A^{p}S(t)\phi^{q}\int_{0}^{t}\left\|S(s)\phi^{q}\right\|_{L^{\infty}(\Omega)}^{\frac{p-q}{q}}\dx s
\]
and the condition (\ref{cond1}) reads 
\[
\|S(t)\phi^{q}\|_{L^{\infty}(\Omega)}^{\frac{q-1}{q}}\int_{0}^{t}\|S(s)\phi^{q}\|_{L^{\infty}(\Omega)}^{\frac{p-q}{q}}\dx s\leq \frac{A-1}{A^{p}}
\]
for some $A>1$. Now, the right hand side attains its maximum on $(1,\infty)$ equal to $\frac{(p-1)^{p-1}}{p^{p}}$ yielding the desired result.  
\end{proof}
The supercritical case is immediately recovered once we set $q=1$ in (\ref{condp}). Let us now turn to the critical case $n(p-1)/2 =q>1$. Due to the smoothing estimate (\ref{smooth}) we have 
\begin{align*}
\left\|S(t)\phi^{q}\right\|_{L^{\infty}(\Omega)}^{\frac{q-1}{q}}\int_{0}^{t}\left\|S(s)\phi^{q}\right\|_{L^{\infty}(\Omega)}^{\frac{p-q}{q}}\dx s&\leq K^{\frac{p-1}{q}} t^{-\frac{n}{2 q}(q-1)}\int_{0}^{t}s^{-\frac{n}{2 q}(p-q)}\dx s
\end{align*}
and 
\begin{equation}\label{critical}
\frac{n}{2 q}(p-q)=\frac{p-q}{p-1}<1
\end{equation}
since $q>1$. Thus we can continue the estimate to get  
\begin{equation}\label{calc}
K^{\frac{p-1}{q}}t^{-\frac{n}{2 q}(q-1)}\int_{0}^{t}s^{-\frac{n}{2 q}(p-q)}\dx s\leq \frac{p-1}{q-1}K^{\frac{p-1}{q}} \leq C_{p},
\end{equation}
where the last inequality follows from the Lemma \ref{lem:smooth} for times small enough. Hence Theorem \ref{prop:lq} immediately implies Theorem \ref{thm:weiss} with the additional advantage of the space-time profile given by the supersolution.\par
In the subcritical range $n(p-1)/2>q$ it is known that the problem (\ref{model}) is ill-posed in $L^{q}(\Omega)$ in the sense that for any $\phi\in L^{q}(\Omega)$ one can find a sequence $\phi_{n}$ of nonnegative smooth functions convergent to $\phi$ in $L^{q}(\Omega)$ but such that blow-up times of the associated solutions converge to zero, see \cite{ARRIETABERNAL2004}. Thus in this range not every initial condition gives rise to a solution. It is therefore expected that the calculation (\ref{calc}) cannot be extended beyond the critical combination of parameters. Nevertheless the condition (\ref{condp}) does not preclude initial data in the subcritical range. It is clear however that the decay of $\|S(t)\phi\|_{L^{\infty}(\Omega)}$ as described by the smoothing estimate for a general element of $L^{q}(\Omega)$ is not enough to ensure that the inequality in question is satisfied. Thus we are led to consider a subset of initial data characterised by a faster decay of the supremum norm under the action of the heat semigroup. The following result will enable us to relate our findings to the third option of the Theorem \ref{thm:weiss}. 
\begin{proposition}\label{suff}
Let $q>1$ and suppose that $\phi\in L^{q}_{+}(\Omega)$ is such that there exists $\tau=\tau(\phi)$ such that 
\[
t^{\frac{1}{p-1}}\left\|S(t)\phi^{q}\right\|_{L^{\infty}(\Omega)}^{\frac{1}{q}}\leq C_{p}^{\frac{1}{p-1}},
\]
then problem (\ref{model}) admits a local solution. 
\end{proposition}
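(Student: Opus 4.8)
The plan is to verify that the assumed bound forces condition~(\ref{condp}) of Theorem~\ref{prop:lq} on some interval $[0,T]$ with $0<T\le\tau$; a classical solution on $\Omega_{T}$ then follows at once. Write $M(t)=\|S(t)\phi^{q}\|_{L^{\infty}(\Omega)}$, so that the hypothesis is precisely the pointwise decay estimate $M(t)\le C_{p}^{\,q/(p-1)}\,t^{-q/(p-1)}$ for $t\in(0,\tau]$. The conceptual reason this should work is that the exponent $1/(p-1)$ appearing in the hypothesis is exactly the scaling-critical rate for~(\ref{model}); consequently, when this decay is substituted into the left-hand side of~(\ref{condp}), the powers of $t$ cancel and one is left with a purely numerical quantity, and all that remains is to check that it does not exceed $C_{p}$.

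Two ingredients go into the computation. First, the heat semigroup is a contraction on $L^{\infty}(\Omega)$ (indeed $\int_{\Omega}K(t,x,y)\,\dx y\le 1$), so $t\mapsto M(t)$ is nonincreasing. Second, the condition $q>1$ is exactly what makes $\tfrac{p-q}{p-1}<1$, so that the integral in~(\ref{condp}) converges at $s=0$. I would split according to the sign of $p-q$. If $q\ge p$, then $(p-q)/q\le 0$ and monotonicity gives $M(s)^{(p-q)/q}\le M(t)^{(p-q)/q}$ for $0\le s\le t$, whence
\[
M(t)^{(q-1)/q}\int_{0}^{t}M(s)^{(p-q)/q}\,\dx s\ \le\ t\,M(t)^{(p-1)/q}\ \le\ C_{p},
\]
the final inequality being the hypothesis evaluated at $t$. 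If $1<q<p$, then $(p-q)/q>0$ and one uses the envelope bound termwise:
\[
\int_{0}^{t}M(s)^{(p-q)/q}\,\dx s\ \le\ C_{p}^{(p-q)/(p-1)}\int_{0}^{t}s^{-(p-q)/(p-1)}\,\dx s\ =\ C_{p}^{(p-q)/(p-1)}\,\tfrac{p-1}{q-1}\,t^{(q-1)/(p-1)},
\]
and combining this with $M(t)^{(q-1)/q}\le C_{p}^{(q-1)/(p-1)}t^{-(q-1)/(p-1)}$ bounds the left-hand side of~(\ref{condp}) by $\tfrac{p-1}{q-1}\,C_{p}$. This is the very same computation as in~(\ref{calc}) in the critical case, with the role of the smoothing constant there played by $K=C_{p}^{q/(p-1)}$.

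The step I expect to be the main obstacle is this last constant count when $1<q<p$. Here the crude substitution overshoots $C_{p}$ by the combinatorial factor $\tfrac{p-1}{q-1}>1$, and — because the powers of $t$ have already cancelled — it cannot be removed by taking $T$ smaller; it has to be absorbed into the constant itself, exactly as the phrase ``for times small enough'' does in~(\ref{calc}), i.e.\ by invoking the hypothesis with $(\tfrac{q-1}{p-1}C_{p})^{1/(p-1)}$ in place of $C_{p}^{1/(p-1)}$ (equivalently, by taking the underlying smoothing constant $K$ small enough). When $q\ge p$ the factor is at most $1$ and no adjustment is needed. Once~(\ref{condp}) holds on $[0,T]$, Theorem~\ref{prop:lq} completes the proof, and as a by-product the supersolution $w(t)=S(t)\phi+A^{p}S(t)\phi^{q}\int_{0}^{t}\|S(s)\phi^{q}\|_{L^{\infty}(\Omega)}^{(p-q)/q}\,\dx s$ supplies an explicit space-time profile for the solution; writing $\|S(t)\phi\|_{L^{pq}(\Omega)}\le\|\phi\|_{L^{q}(\Omega)}^{1/p}\|S(t)\phi^{q}\|_{L^{\infty}(\Omega)}^{(p-1)/(pq)}$ (Jensen together with interpolation) is then what lines the hypothesis up against option~3 of Theorem~\ref{thm:weiss}.
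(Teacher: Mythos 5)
Your proposal takes the same route as the paper's proof: substitute the assumed decay of $M(t)=\|S(t)\phi^{q}\|_{L^{\infty}(\Omega)}$ into the left-hand side of (\ref{condp}), observe that the powers of $t$ cancel, and invoke Theorem \ref{prop:lq}. The paper's proof is exactly your second computation compressed into one line, except that it asserts
\[
M(t)^{\frac{q-1}{q}}\int_{0}^{t}M(s)^{\frac{p-q}{q}}\,\dx s\ \leq\ C_{p}\,t^{-\frac{q-1}{p-1}}\int_{0}^{t}s^{-\frac{p-q}{p-1}}\,\dx s\ =\ C_{p},
\]
whereas, as you correctly compute, $\int_{0}^{t}s^{-\frac{p-q}{p-1}}\,\dx s=\tfrac{p-1}{q-1}\,t^{\frac{q-1}{p-1}}$, so the right-hand side is $\tfrac{p-1}{q-1}\,C_{p}$, not $C_{p}$. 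The ``main obstacle'' you flag is therefore not a weakness of your argument but a genuine defect in the paper's: when $1<q<p$ the factor $\tfrac{p-1}{q-1}$ exceeds $1$, it cannot be removed by shrinking $T$ because the $t$-dependence has already cancelled, and $C_{p}=(p-1)^{p-1}/p^{p}$ cannot be enlarged since it is the maximum of $(A-1)/A^{p}$. Your proposed repair --- running the hypothesis with $\bigl(\tfrac{q-1}{p-1}C_{p}\bigr)^{1/(p-1)}$ in place of $C_{p}^{1/(p-1)}$ --- is the honest fix, though it amounts to proving a slightly different statement than the one displayed.

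Two further points in your favour. First, your separate treatment of $q\geq p$ via monotonicity of $M$ is not merely cosmetic: for $q>p$ the exponent $(p-q)/q$ is negative, so the termwise substitution $M(s)^{(p-q)/q}\leq C_{p}^{(p-q)/(p-1)}s^{-(p-q)/(p-1)}$ reverses direction and the paper's one-line estimate is not even formally valid there; your contraction-plus-monotonicity argument closes that case cleanly with constant exactly $C_{p}$. Second, your identification of $q>1$ as precisely the condition making $(p-q)/(p-1)<1$, and your closing remark relating the hypothesis to condition (\ref{conv}) of Theorem \ref{thm:weiss} via Jensen and the $L^{1}$--$L^{\infty}$ splitting, both match the surrounding discussion in the paper. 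In short: same strategy, but your version is the correct one, and it shows that as literally stated the proposition is established only for $q\geq p$ unless the constant in the hypothesis is reduced by the factor $\bigl(\tfrac{q-1}{p-1}\bigr)^{1/(p-1)}$.
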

\begin{proof}
We have 
\[
\left\|S(t)\phi^{q}\right\|_{L^{\infty}(\Omega)}^{\frac{q-1}{q}}\int_{0}^{t}\left\|S(s)\phi^{q}\right\|_{L^{\infty}(\Omega)}^{\frac{p-q}{q}}\dx s\leq C_{p}t^{-\frac{q-1}{p-1}}\int_{0}^{t}s^\frac{p-q}{p-1}\dx s=C_{p}
\]
as required by the condition (\ref{condp}). 
\end{proof}
Observe that for a general element $\phi\in L^{q}(\Omega)$ the Jensen inequality (\ref{eq:jen}) yields 
\[
\|S(t)\phi\|_{L^{p q}(\Omega)}\leq \left\|S(t)\phi^{q}\right\|_{L^{ p }(\Omega)}^{\frac{1}{q}}
\] 
and at the same time the smoothing estimate (\ref{stsmooth}) implies that both norms share the same decay rate $t^{-\frac{n(p-1)}{2 p q}}$. If we then assume that the initial condition $\phi\in L^{q}_{+}(\Omega)$ is such that 
\begin{equation}\label{uff}
t^{\alpha}\left\|S(t)\phi^{q}\right\|_{L^{ p }(\Omega)}^{\frac{1}{q}}\rightarrow 0\quad \mbox{ as }\quad t\to 0
\end{equation}
in place of assumption (\ref{conv}), then we have 
\begin{align*}
\left\|S(2 t)\phi^{q}\right\|_{L^{\infty}(\Omega)}^{\frac{1}{q}}&=\left\|S\left(t\right)S(t)\phi^{q}\right\|_{L^{\infty}(\Omega)}^{\frac{1}{q}}\leq  t^{-\frac{n}{2 p q }}\|S(t)\phi^{q}\|_{L^{p }(\Omega)}^{\frac{1}{q}}
\end{align*}
and so
\[
t^{\frac{1}{p-1}}\|S(t)\phi^{q}\|_{L^{p }(\Omega)}^{\frac{1}{q}}\leq 2^{\alpha}\left(\frac{t}{2}\right)^{\alpha}\left\|S(t/2)\right\|_{L^{p}(\Omega)}^{\frac{1}{q}}\rightarrow 0 \quad \mbox{ as }\quad t\to 0.
\]
Thus Theorem \ref{thm:weiss} and Proposition \ref{suff} agree for initial data satisfying condition (\ref{uff}). It would be interesting to determine whether there are initial conditions that comply with the requirements of Theorem \ref{thm:weiss} but not of Proposition \ref{suff}. \par 
Interestingly the sufficient condition given in the Proposition \ref{suff} bears a striking resemblance to the necessary condition that any nonnegative integral solution of (\ref{model}) must satisfy, see \cite{WEISSLER1986}. In the article quoted we find the following result. 
\begin{proposition}
Suppose that $u\in \M$ is an integral solution of the equation (\ref{model}) in the sense of the variation of constants formula. Then necessarily 
\[
t^{\frac{1}{p-1}}\|S(t)\phi\|_{L^{\infty}(\Omega)}\leq \left(\frac{1}{p-1}\right)^{\frac{1}{p-1}}
\]
for $t\in (0,T]$ and $x\in \Omega$. 
\end{proposition}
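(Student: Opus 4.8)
The plan is to reduce the assertion to a one–dimensional ODE comparison by exploiting the semigroup structure of (\ref{eq:VoC}). First I would record that an integral solution automatically satisfies
\[
u(t) = S(t-s)\,u(s) + \int_{s}^{t} S(t-\sigma)\,u(\sigma)^{p}\,\dx\sigma, \qquad 0\le s\le t<T,
\]
which follows from (\ref{eq:VoC}) applied at time $s$, the identity $S(t-s)S(s-\sigma)=S(t-\sigma)$, and Tonelli's theorem (every integrand in sight is nonnegative). Via the kernel representation (\ref{eq:ker}) this identity holds pointwise for a.e.\ $x\in\Omega$, and since $u\in\M$ the correction term $\int_{0}^{t}S(t-\sigma)u(\sigma)^{p}\dx\sigma$ is finite a.e.

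The heart of the argument is the following device. Fix $t_{0}\in(0,T)$ and a point $x_{0}\in\Omega$ at which $u(t_{0},\cdot)$ and the identity above make sense (a.e.\ $x_{0}$), and set
\[
m(s) = \bigl(S(t_{0}-s)\,u(s)\bigr)(x_{0}), \qquad s\in[0,t_{0}).
\]
Evaluating the displayed identity at $(t_{0},x_{0})$ gives $m(s)=u(t_{0},x_{0})-\int_{s}^{t_{0}}\bigl(S(t_{0}-\sigma)u(\sigma)^{p}\bigr)(x_{0})\,\dx\sigma$, so $m$ is finite, continuous, nondecreasing on $[0,t_{0})$, with $m(0)=(S(t_{0})\phi)(x_{0})$. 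Now I would invoke the Jensen-type bound of Lemma \ref{lem:jen}: since $\bigl(S(t_{0}-\sigma)u(\sigma)\bigr)^{p}\le S(t_{0}-\sigma)u(\sigma)^{p}$ pointwise, we get $m(\sigma)^{p}\le\bigl(S(t_{0}-\sigma)u(\sigma)^{p}\bigr)(x_{0})$ and hence
\[
m(s_{2})-m(s_{1}) \ge \int_{s_{1}}^{s_{2}} m(\sigma)^{p}\,\dx\sigma, \qquad 0\le s_{1}\le s_{2}<t_{0}.
\]

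From here it is a routine comparison with the blow-up ODE $y'=y^{p}$. Write $c_{0}=m(0)=(S(t_{0})\phi)(x_{0})$; if $c_{0}=0$ there is nothing to prove, so assume $c_{0}>0$. Then $M(s):=c_{0}+\int_{0}^{s}m(\sigma)^{p}\dx\sigma$ is $C^{1}$, satisfies $c_{0}\le M(s)\le m(s)<\infty$ for $s<t_{0}$, and $M'(s)=m(s)^{p}\ge M(s)^{p}$. Dividing by $M^{p}$ and integrating over $[0,s]$ yields $\tfrac{1}{p-1}\bigl(c_{0}^{-(p-1)}-M(s)^{-(p-1)}\bigr)\ge s$ for every $s<t_{0}$, so $s\le\tfrac{1}{(p-1)c_{0}^{p-1}}$ for all such $s$ and therefore $t_{0}\le\bigl((p-1)c_{0}^{p-1}\bigr)^{-1}$, i.e.\ $(S(t_{0})\phi)(x_{0})\le\bigl((p-1)t_{0}\bigr)^{-1/(p-1)}$. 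Taking the essential supremum over $x_{0}$ (or, since $S(t_{0})\phi$ is continuous for $t_{0}>0$, the supremum over all $x_{0}$) gives $t_{0}^{1/(p-1)}\|S(t_{0})\phi\|_{L^{\infty}(\Omega)}\le(1/(p-1))^{1/(p-1)}$; the endpoint $t=T$ follows by letting $t_{0}\uparrow T$ and using that $t\mapsto\|S(t)\phi\|_{L^{\infty}(\Omega)}$ is nonincreasing.

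The one step that requires genuine care is the passage to $m'\ge m^{p}$: it is exactly Jensen's inequality against the Dirichlet heat kernel $K(t_{0}-\sigma,x_{0},\cdot)$, whose total mass is at most — not equal to — one, so one must use the form valid for sub-probability measures, which is precisely what Lemma \ref{lem:jen} packages. Given that lemma, the remaining ingredients — the semigroup identity, the finiteness and measurability bookkeeping forced by $u\in\M$, and the scalar comparison — are entirely standard.
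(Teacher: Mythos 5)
Your argument is correct, and it obtains the sharp constant $(p-1)^{-1/(p-1)}$. Note that the paper itself offers no proof of this proposition --- it is quoted verbatim from the cited reference \cite{WEISSLER1986} --- so there is nothing internal to compare against; your route (the semigroup identity $u(t)=S(t-s)u(s)+\int_s^t S(t-\sigma)u(\sigma)^p\,\dx\sigma$, followed by the pointwise ODE comparison for $s\mapsto\bigl(S(t_0-s)u(s)\bigr)(x_0)$ via the sub-probability Jensen inequality of Lemma \ref{lem:jen}) is essentially Weissler's original one. You are also right to flag the Jensen step as the one needing care; a cruder alternative, iterating $u(t)\geq\int_0^t S(t-s)u(s)^p\,\dx s$ to get lower bounds of the form $C_k t^{a_k}(S(t)\phi)^{p^k}$, also proves a bound of this type but with a strictly worse constant, so the ODE comparison is genuinely needed for the stated statement.
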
   
This intriguing formal similarity between the sufficient and the necessary conditions requires more research. \par
Let us conclude this section by restating the existence result in a unified form. The inequality (\ref{condp}) gives a well-posedness condition of a more general form that the one involving critical exponents. To identify the admissible data we can define the operator 
\[
P(\phi,T)=\sup_{t\in [0,T]}\left(\left\|S(t)\phi^{q}\right\|_{L^{\infty}(\Omega)}^{\frac{q-1}{q}}\int_{0}^{t}\left\|S(s)\phi^{q}\right\|_{L^{\infty}(\Omega)}^{\frac{p-q}{q}}\dx s\right)
\]
so that we can say that problem (\ref{model}) has classical solution for every initial condition in the set 
\[
X=\left\{\phi\in L^{q}_{+}(\Omega)\ :\ P(\phi,T)\leq C_{p}\mbox{ for some }T\in (0,\infty]\right\},
\]
where the global supersolutions are guaranteed for all $\phi$ for which $P(\phi,\infty)\leq C_{p}$. \par 
With this notation we can rephrase our findings and their correspondence to the existing results. In the supercritical and critical range we find that $L^{q}_{+}(\Omega)= X$. Indeed, the calculation above states that for this choice of parameters every $\phi\in L^{q}_{+}(\Omega)$ satisfies $P(\phi,T)\leq C_{p}$ for sufficiently small $T$ so that $L^{q}_{+}(\Omega)\subseteq X$. On the other hand $X\subseteq L^{q}_{+}(\Omega)$ by definition. In the case $n(p-1)/2q>1$ instead of equality we have a proper inclusion $X\subset L^{q}_{+}(\Omega)$ which reflects the fact that not every initial condition in $L^{q}_{+}(\Omega)$ gives rise to a solution.  
\section{Final comments}
The method of supersolutions shares some features with Banach's fixed point argument which is a standard technique in the field of well-posedness of PDEs. There are however essential differences which may render the method described here easier to use. Observe that there is no need for constructing a particular space for the method to work. The fixed point argument requires a contraction space, i.e.\  a set of curves that when equipped with some appropriate metric becomes a complete metric space and such that the operator $\F$ restricted to this space is a strict contraction. Finding an appropriate choice of the contraction space may prove difficult in many cases and may require great intuition and skill in manipulating inequalities. This point is illustrated by the construction of the contraction space for the model problem (\ref{model}) in \cite{BREZISCAZENAVE1996,WEISSLER1979}. The difficulties arise partly because the contraction argument yields not only existence but also uniqueness and so the contraction space needs to accommodate properties necessary for proving both. The method described in this paper does not address the issue of uniqueness, which we leave for  a dedicated discussion elsewhere, but at the same time asserts the existence of solutions under arguably milder conditions than those usually required in fixed point arguments. \par
One of the most important features of the technique is that the initial data $\phi$ is, to begin with, only required to be nonnegative and integrable and it is the procedure itself that imposes further restrictions in the course of estimates. Hence, it is the source term $f$ that defines the space of admissible initial data rather than the data restricting the choice of the source term. In consequence we may find that the given problem possesses local solutions for initial data that forms a set that is not necessarily a linear space. Such a situation should in general be expected when the equation at hand is a nonlinear one, nevertheless at the time of constructing the contraction space one is usually interested in well-posedness in some ``well-behaved'' space like a Hilbert or a Banach space. \par
The method yields pointwise bounds in the form of space-time profiles which may prove useful in investigations of such topics as decay rate, blow-up rate, blow-up profiles, blow-up sets and asymptotic behaviour in instances when global supersolutions are available.  \par
The method affords many possible generalisations. All the results hold if we relax the requirement that the domain $\Omega$ is smooth and demand instead that it satisfies an exterior cone condition, see \cite{SOUPLET} p. 440 and references therein. In the analysis of the integral equation (\ref{eq:VoC}) the heat semigroup may be replaced by a more general one as long as the requirements of the theorems are satisfied, namely that the family $\left\{S(t)\right\}_{t\geq 0}$ is a positivity preserving semigroup on $L^{1}(\Omega)$ such that $S(t)\phi\in L^{\infty}(\Omega)$ for any $\phi\in L^{1}(\Omega)$. The restriction that $f$ should be nondecreasing may be weakened since the necessary ingredient of the existence proof is the order preserving property of the operator $\F$ restricted to the sequence $\{w_{k}\}_{k\geq 0}$ which is guaranteed if $f$ is nondecreasing but may be realised without this assumption. As far as the existence proof is concerned the requirement that $f$ be continuous may be weakened since a nondecreasing function is almost everywhere continuous and the result is meant to hold almost everywhere. \par
In the example of the polynomial nonlinearity (\ref{model}) the key ingredient that allowed us to extend the monotonicity argument to cover the critical and subcritical cases was the application of the Jensen inequality (\ref{eq:jen}). Thanks to this inequality it was possible to make better use of the assumed integrability of the initial data. Note that the general construction of supersolutions suggested in Proposition \ref{prop:main} does not specify how the extra regularity is obtained. The Jensen inequality is particularly apt in the context of Lebesgue spaces. It would be interesting to see how to extract the required information in other functional contexts e.g.\  Sobolev spaces or Orlicz spaces. \par
In the subcritical range the results obtained with the supersolution method resemble those described in \cite{WEISSLER1979,WEISSLER1986}. At this stage however the similarity is formal and more research is needed in order properly to relate the two approaches.\par
Lastly, it would be desirable to relax the nonnegativity assumption on the initial data and to guarantee uniqueness. These issues require a separate treatment and will be discussed elsewhere.

\section*{Acknowledgments}
Authors would like to thank Dr Alejandro Vidal-Lopez for many stimulating conversations and useful remarks. \\
This research was supported by EPSRC, grant No. EP/G007470/1.

\bibliography{/Users/miko/Stuff/bibliography}

\begin{thebibliography}{1}

\bibitem{ARRIETABERNAL2004}
Jos{\'e}~M. Arrieta and An{\'{\i}}bal Rodr{\'{\i}}guez-Bernal.
\newblock Non well posedness of parabolic equations with supercritical
  nonlinearities.
\newblock {\em Commun. Contemp. Math.}, 6(5):733--764, 2004.

\bibitem{BREZISCAZENAVE1996}
Ha{\"{\i}}m Brezis and Thierry Cazenave.
\newblock A nonlinear heat equation with singular initial data.
\newblock {\em J. Anal. Math.}, 68:277--304, 1996.

\bibitem{CAZENAVEBOOK}
Thierry Cazenave and Alain Haraux.
\newblock {\em An introduction to semilinear evolution equations}, volume~13 of
  {\em Oxford Lecture Series in Mathematics and its Applications}.
\newblock The Clarendon Press Oxford University Press, New York, 1998.
\newblock Translated from the 1990 French original by Yvan Martel and revised
  by the authors.

\bibitem{SOUPLET}
P.~Quittner P.~Souplet.
\newblock {\em Superlinear Parabolic Problems. Blow-up, Global Existence and
  Steady States}.
\newblock Birkauser Advanced Texts, Basel, 2007.

\bibitem{WEISSLER1979}
Fred~B. Weissler.
\newblock Semilinear evolution equations in {B}anach spaces.
\newblock {\em J. Funct. Anal.}, 32(3):277--296, 1979.

\bibitem{WEISSLER1980}
Fred~B. Weissler.
\newblock Local existence and nonexistence for semilinear parabolic equations
  in {$L^{p}$}.
\newblock {\em Indiana Univ. Math. J.}, 29(1):79--102, 1980.

\bibitem{WEISSLER1981}
Fred~B. Weissler.
\newblock Existence and nonexistence of global solutions for a semilinear heat
  equation.
\newblock {\em Israel J. Math.}, 38(1-2):29--40, 1981.

\bibitem{WEISSLER1986}
Fred~B. Weissler.
\newblock {$L^p$}-energy and blow-up for a semilinear heat equation.
\newblock In {\em Nonlinear functional analysis and its applications, {P}art 2
  ({B}erkeley, {C}alif., 1983)}, volume~45 of {\em Proc. Sympos. Pure Math.},
  pages 545--551. Amer. Math. Soc., Providence, RI, 1986.

\end{thebibliography}
\bibliographystyle{plain}

\end{document}